\documentclass[12pt,oneside,english]{amsart}
\usepackage[T1]{fontenc}
\usepackage[utf8]{inputenc}
\usepackage{geometry}
\usepackage{amsthm}
\usepackage{amssymb}
\usepackage{mathdots}
\usepackage{esint}
\usepackage{float}
\usepackage{xcolor}

\newcommand{\chg}[1]{{\color{magenta}#1}}

\makeatletter
\numberwithin{equation}{section}
\numberwithin{figure}{section}
\theoremstyle{plain}
\newtheorem{thm}{\protect\theoremname}
\newtheorem{rmk}{\protect\remarkname}
\newtheorem{lem}{Lemma}[section]
\DeclareMathOperator{\Ai}{Ai}

\usepackage[english]{babel}
\usepackage{fancyhdr}
\usepackage{tikz}
\usetikzlibrary{arrows,shapes}
\usetikzlibrary{calc,decorations.markings}
\tikzset{
  reversed with radius/.style={
    x radius=#1,
    y radius=-#1,
 }
}

\tikzset{
  with arrows/.style={
    decoration={ markings,
      mark=between positions #1 and .999 step #1 with {\arrow{stealth}}
    }, postaction={decorate}
  }, with arrows/.default=25mm,
} 

\usepackage{scalerel}
\usepackage{stackengine}\stackMath
\newcommand{\reallywidehat}[1]{%
\savestack{\tmpbox}{\stretchto{%
  \scaleto{%
    \scalerel*[\widthof{\ensuremath{#1}}]{\kern-.6pt\bigwedge\kern-.6pt}%
    {\rule[-\textheight/2]{1ex}{\textheight}}
  }{\textheight}%
}{0.5ex}}%
\stackon[1pt]{#1}{\tmpbox}%
}

\providecommand{\theoremname}{Theorem}

\newcommand{\abs}[1]{\ensuremath{|#1|}}

\newcommand{\Abs}[1]{\ensuremath{\left|#1\right|}}

\renewcommand{\d}[1]{\ensuremath{\textnormal{d}#1}}

\newcommand{\cC}{\mathcal{C}}

\newcommand{\cO}{\mathcal{O}}

\usepackage{babel}

  \providecommand{\remarkname}{Remark}
\providecommand{\theoremname}{Theorem}

\usepackage[section]{placeins}

\usepackage{babel}

\providecommand{\theoremname}{Theorem}

\usepackage{babel}

\providecommand{\remarkname}{Remark}
\providecommand{\theoremname}{Theorem}

\usepackage{babel}

\providecommand{\remarkname}{Remark}
\providecommand{\theoremname}{Theorem}

\usepackage{babel}

\providecommand{\remarkname}{Remark}
\providecommand{\theoremname}{Theorem}

\usepackage{babel}

\providecommand{\remarkname}{Remark}
\providecommand{\theoremname}{Theorem}

\usepackage{babel}

\providecommand{\remarkname}{Remark}
\providecommand{\theoremname}{Theorem}

\makeatother

\usepackage{babel}
\providecommand{\theoremname}{Theorem}

\begin{document}

\title[Asymptotic sharpness of a Nikolskii type inequality]{Asymptotic sharpness of a Nikolskii type inequality for rational
functions in the Wiener algebra}

\author{Benjamin Auxemery}

\address{Aix-Marseille University, CNRS, I2M, Marseille,
France.}

\email{benjamin.auxemery@gmail.com}

\author{Alexander Borichev}

\address{Aix-Marseille University, CNRS, I2M, Marseille,
France.}

\email{alexander.borichev@math.cnrs.fr}

\author{Rachid Zarouf}

\address{Aix-Marseille University, University of Toulon, CNRS, CPT, Marseille, France}

\address{Aix-Marseille University, Laboratory ADEF, Marseille, France}

\email{rachid.zarouf@univ-amu.fr, rzarouf@gmail.com}

\keywords{Wiener algebra, Hardy space, rational functions, Fourier coefficients,
powers of a Blaschke factor, method of the stationary phase.}

\subjclass[2020]{30A10, 42A16, 30H10, 30H50, 30J10}

\begin{abstract}
We establish the asymptotic sharpness of a Nikolskii type inequality
proved by A. Baranov and R. Zarouf for rational functions $f$ in the
Wiener algebra of absolutely convergent Fourier series, with at most
$n$ poles, all lying outside the dilated disc $\frac{1}{\lambda}\mathbb{D}$,
where $\mathbb{D}$ denotes the open unit disc and $\lambda\in[0,1)$
is fixed. More precisely, this inequality tells that the Wiener norm of such
functions is bounded by their $H^{2}$-norm -- i.e., their norm in
the Hardy space of the disc -- times a factor of order $\sqrt{\frac{n}{1-\lambda}}$.
In this paper, we construct explicit test functions showing that this
bound cannot be improved in general: the inequality is asymptotically
sharp as $n\to\infty$, up to a universal constant, for every fixed
$\lambda\in[0,1)$.
\end{abstract}

\thanks{The second author was supported by ANR-24-CE40-5470}

\maketitle

\section{Introduction}

Let $\mathcal{P}_{n}$ be the complex space of polynomials of degree
less than or equal to $n\ge1$, and let $\mathbb{D}=\left\{ z\in\mathbb{C}\,:\left|z\right|<1\right\} $
be the unit disc in the complex plane. Given $\lambda\in[0,\,1),$
we define 
\[
\mathcal{R}_{n,\,\lambda}=\Bigl\{ \frac{p}{q}\,:\;p,\,q\in\mathcal{P}_{n},\, q \text{\ zero free in\ } \frac{1}{\lambda} \mathbb{D}  \Bigr\} ,
\]
the set of all rational functions 
of bidegree at most $(n,n)$, with all poles
outside of $\frac{1}{\lambda}\mathbb{D}.$
In particular, for $\lambda=0$, we get $\mathcal{R}_{n,\,0}=\mathcal{P}_n$.

A.~Baranov and R.~Zarouf proved in \cite{BZ} an S.~M.~Nikolskii type inequality
in the classical Beurling--Sobolev spaces 
\[
\ell_{A}^{p}=\Bigl\{ f\in\mathrm{Hol}(\mathbb{D}): f(z)=\sum_{k\ge 0}\hat{f}(k)z^{k},\,\|f\|_{\ell_A^p}^p=\sum_{k\ge 0}\abs{\hat{f}(k)}^{p}<\infty\Bigr\} ,
\]
for functions in $\mathcal{R}_{n,\,\lambda}$, 
in the case
where $p=1$ on the left-hand side and $p=2$ on the right-hand side
of the inequality. More precisely, they proved
that the embedding of the Wiener algebra $\ell_{A}^{1}=W$ into the Hardy
space $\ell_{A}^{2}=H^{2}$ is invertible on $\mathcal{R}_{n,\,\lambda}$
and that the corresponding embedding constant is $\lesssim\sqrt{\frac{n}{1-\lambda}}$.

\begin{thm}
\label{Nik_type_ineq} \textup{\cite[Theorem 2]{BZ} }Let $n\ge 1$,
$\lambda\in[0,1)$, and let $f\in\mathcal{R}_{n,\,\lambda}$. For some absolute constant $K$ we have 
\[
\|f\|_{\ell_{A}^{1}}\le K\sqrt{\frac{n}{1-\lambda}}\|f\|_{\ell_{A}^{2}}.
\]
\end{thm}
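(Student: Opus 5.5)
A natural route to Theorem~\ref{Nik_type_ineq} goes through the model space $K_B=H^2\ominus BH^2$ and the Cauchy--Schwarz inequality applied to a weighted $\ell^2$ norm. Write $f=p/q$ with poles $1/\bar\lambda_1,\dots,1/\bar\lambda_m$, $m\le n$, $|\lambda_j|\le\lambda$, and let $B$ be the Blaschke product with zeros $\lambda_1,\dots,\lambda_m$. Then $f$ lies in $K_{zB'}$, or more precisely in $\mathcal{P}_{n-m}+K_B\subset K_{z^{n+1}B}$, say $f\in K_\Theta$ with $\Theta=z^{n+1}B$. So it suffices to bound $\|f\|_{\ell^1_A}$ for $f$ in a model space generated by at most $2n+1$ zeros, all in $\lambda\overline{\mathbb{D}}$.

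The key estimate will be
\[
\|f\|_{\ell^1_A}=\sum_k|\hat f(k)|\le\Bigl(\sum_k\frac{1}{w_k}\Bigr)^{1/2}\Bigl(\sum_k w_k|\hat f(k)|^2\Bigr)^{1/2}.
\]
Equivalently, and more cleanly, I will use $\sum_k k|\hat f(k)|^2=\langle zf',f\rangle\le\|f\|_2\|f'\|_2$ and a Bernstein inequality in $K_\Theta$. Splitting $\sum_k|\hat f(k)|$ into $k\le N$ and $k>N$, Cauchy--Schwarz gives
\[
\sum_k|\hat f(k)|\le\sqrt{N}\,\|f\|_2+N^{-1/2}\Bigl(\sum_k k|\hat f(k)|^2\Bigr)^{1/2}\cdot C,
\]
or, more precisely, $\sum_{k>N}|\hat f(k)|\le(\sum_{k>N}k^{-2})^{1/2}\|f'\|_2\le N^{-1/2}\|f'\|_2$. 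Optimizing in $N$ then yields
\[
\|f\|_{\ell^1_A}\lesssim\|f\|_2^{1/2}\|f'\|_2^{1/2}.
\]

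The Bernstein step is where the main work lies: I need $\|f'\|_{H^2}\lesssim\frac{n}{1-\lambda}\|f\|_{H^2}$ for $f\in K_\Theta$. This is a known inequality of Dyn'kin/Baranov type: for $f\in K_\Theta$ with $\Theta$ a finite Blaschke product, $\|f'\|_2\le C\|\Theta'\|_\infty\|f\|_2$. Here
\[
|\Theta'(\zeta)|=n+1+\sum_j\frac{1-|\lambda_j|^2}{|\zeta-\lambda_j|^2}\le n+1+m\,\frac{1+\lambda}{1-\lambda}\lesssim\frac{n}{1-\lambda}.
\]
To prove this Bernstein bound directly, I would use the reproducing kernel representation $f(z)=\langle f,k_z\rangle$, $k_z(w)=\frac{1-\overline{\Theta(z)}\Theta(w)}{1-\bar z w}$. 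Differentiating gives $f'(z)$ as a pairing with $\partial_{\bar z}k_z$. One then estimates the operator $f\mapsto f'$ on the boundary by writing $f=\Theta\bar g\bar z$ and using a Schur test with the kernel $|\Theta(\zeta)-\Theta(\xi)|/|\zeta-\xi|^2$. The Schur test gives the bound $\sup|\Theta'|$ because $|\Theta(\zeta)-\Theta(\xi)|\le\min(2,\|\Theta'\|_\infty|\zeta-\xi|)$.

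This Schur-test computation is the main obstacle. If it becomes messy, I would fall back to the simpler route of an explicit orthonormal basis of $K_\Theta$ (the Takenaka--Malmquist basis $e_j=\frac{\sqrt{1-|\lambda_j|^2}}{1-\bar\lambda_jz}\prod_{i<j}b_{\lambda_i}$). I would then bound $\|e_j'\|$ and control cross terms, though that route tends to lose a logarithm. Combining the estimates then gives
\[
\|f\|_{\ell^1_A}\lesssim\sqrt{\|f\|_2\cdot\tfrac{n}{1-\lambda}\|f\|_2}=\sqrt{\tfrac{n}{1-\lambda}}\,\|f\|_2,
\]
which is the claim with an absolute constant $K$.
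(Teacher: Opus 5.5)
There is no proof of this statement in the paper to compare against. Theorem~\ref{Nik_type_ineq} is quoted from \cite[Theorem 2]{BZ}, and the paper only establishes its sharpness, so your argument has to stand on its own.

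Your skeleton is sound. The inclusion $f\in\mathcal{P}_{n-m}+K_B\subset K_\Theta$ with $\Theta=z^{n+1}B$ is correct. On $\mathbb{T}$ one has $|\Theta'|=n+1+\sum_j\frac{1-|\lambda_j|^2}{|\zeta-\lambda_j|^2}\lesssim\frac{n}{1-\lambda}$. The splitting at $N$ works in the form $\sum_{k>N}|\hat f(k)|\le N^{-1/2}\|f'\|_2$. It does not work with $(\sum_k k|\hat f(k)|^2)^{1/2}$, since $\sum_{k>N}k^{-1}=\infty$. With $N\ge 1$ it gives $\|f\|_{\ell^1_A}\lesssim\|f\|_2+\|f\|_2^{1/2}\|f'\|_2^{1/2}$, which is enough because $n/(1-\lambda)\ge 1$.

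To close the argument, invoke the $L^2$ Bernstein inequality $\|f'\|_2\le C\|\Theta'\|_\infty\|f\|_2$ on $K_\Theta$ as a known theorem (e.g.\ Dyakonov's Bernstein inequality for model spaces). Alternatively, use directly the $H^2$ Bernstein bound $\|f'\|_2\lesssim\frac{n}{1-\lambda}\|f\|_2$ on $\mathcal{R}_{n,\lambda}$ from \cite{ZR}, which the paper mentions in its second remark. Either way the argument is then complete.

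What fails is the self-contained proof of the Bernstein step that you sketch. The Schur test with the positive kernel $|\Theta(\zeta)-\Theta(\xi)|/|\zeta-\xi|^2$ yields nothing. As $\zeta\to\xi$ we have $|\Theta(\zeta)-\Theta(\xi)|/|\zeta-\xi|\to|\Theta'(\xi)|\ge n+1$, so the kernel behaves like $|\Theta'(\xi)|/|\zeta-\xi|$ near the diagonal. Indeed $\int_{\mathbb{T}}\min(2,M|\zeta-\xi|)\,|\zeta-\xi|^{-2}\,dm(\zeta)=\infty$. The map $\bar\Theta f\mapsto f'$ is a singular integral of Hilbert-transform type, and its $L^2$ bound has to come from cancellation.

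One repair: since the Riesz projection of $\bar\Theta f$ vanishes (because $f\in K_\Theta$), the linear term may be subtracted, giving
\[
f'(\xi)=\int_{\mathbb{T}}(\bar\Theta f)(\zeta)\,\zeta\,\frac{\Theta(\zeta)-\Theta(\xi)-\Theta'(\xi)(\zeta-\xi)}{(\zeta-\xi)^2}\,dm(\zeta),\qquad \xi\in\mathbb{T}.
\]
Split at $|\zeta-\xi|=1/M$, where $M=\|\Theta'\|_\infty$.
\begin{itemize}
\item For $|\zeta-\xi|<1/M$, the kernel is at most $M^2$ in modulus, because $\|\Theta''\|_\infty\le 2M^2$ for a finite Blaschke product.
\item For $|\zeta-\xi|\ge 1/M$, the piece $(\Theta(\zeta)-\Theta(\xi))/(\zeta-\xi)^2$ is at most $2|\zeta-\xi|^{-2}$.
\item The Schur test handles both of these pieces, with bound $\cO(M)$.
\item What remains is $-\Theta'(\xi)$ times the truncated Cauchy integral $\int_{|\zeta-\xi|\ge 1/M}\frac{\zeta\,(\bar\Theta f)(\zeta)}{\zeta-\xi}\,dm(\zeta)$. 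This is bounded on $L^2$ uniformly in the truncation, so it contributes $\lesssim M\|f\|_2$.
\end{itemize}
Together these give $\|f'\|_2\lesssim M\|f\|_2$.

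Your Takenaka--Malmquist fallback is not a substitute. Any logarithmic loss in the Bernstein constant passes straight into the final estimate and destroys the order $\sqrt{n/(1-\lambda)}$.
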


This inequality provides a quantitative control of the $\ell_{A}^{1}$-norm
by the $H^{2}$-norm on the class $\mathcal{R}_{n,\,\lambda}$ of
rational functions of bidegree at most $(n,n)$ with poles outside the dilated
disc $\frac{1}{\lambda}\mathbb{D}$. While the sharpness of the constant
is not yet established (this is the purpose of the present note),
it already plays a central role in \cite{BZ}, where it is used to
derive sharp estimates for certain Nevanlinna--Pick type interpolation
quantities constrained by Beurling--Sobolev norms, see \cite[Theorem 1]{BZ}.
Specifically, the authors exploit this embedding to control the $H^{\infty}$-norm
of a rational interpolant in terms of the input/output data associated
with the corresponding Nevanlinna--Pick type problem. 

The upper estimate in Theorem \ref{Nik_type_ineq} reflects the delicacy
of controlling the $\ell_A^1$-norm as $n$ grows large and as poles
approach the unit circle, and its utility goes beyond interpolation,
potentially impacting rational approximation problems and norm estimates
in related analytic function spaces.

Below we prove the sharpness of the embedding constant $\sqrt{\frac{n}{1-\lambda}}$
in the following sense. Let $b_\lambda(z)=(z-\lambda)/(1-\overline{\lambda}z)$.
Given $n\ge 1$ and $\lambda\in[1/2,1)$, we consider the test function $f_{n,\lambda}\in\mathcal{R}_{n,\,\lambda}$ defined by 
\[
f_{n,\lambda}(z)=\frac{1}{1-\lambda z}b_{\lambda}^{n-1}(z).
\]
Given $n\ge 1$ and $\lambda\in[0,1/2]$, we set 
\[
D_{n}(z)=\sum_{k=0}^{n-1}z^{k}\in\mathcal{R}_{n,\,0}\subset\mathcal{R}_{n,\,\lambda}
\]
be the (somewhat modified) standard Dirichlet kernel of order $n$.

\begin{thm}
\label{Nik_type_sharpness} There exists an absolute positive constant $K$ such that if $\lambda\in[1/2,1)$, 
then 
\[
\|f_{n,\lambda}\|_{\ell_{A}^{1}}\ge K\sqrt{\frac{n}{1-\lambda}}\|f_{n,\lambda}\|_{\ell_{A}^{2}},\qquad n\ge n(\lambda).
\]
If $n\geq1$ and $\lambda\in[0,1/2]$, then
\[
\|D_{n}\|_{\ell_{A}^{1}}\ge \frac1{\sqrt{2}}\sqrt{\frac{n}{1-\lambda}}\|D_{n}\|_{\ell_{A}^{2}}.
\]
\end{thm}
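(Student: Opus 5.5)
The second statement is elementary and I would dispose of it first. We have $\|D_n\|_{\ell^1_A}=n$ and $\|D_n\|_{\ell^2_A}=\sqrt n$, so the ratio is $\sqrt n$. Since $1-\lambda\ge 1/2$ for $\lambda\in[0,1/2]$, we get $\sqrt{n/(1-\lambda)}\le\sqrt{2n}$. Hence $\|D_n\|_{\ell^1_A}=\sqrt n\,\|D_n\|_{\ell^2_A}\ge \frac1{\sqrt2}\sqrt{n/(1-\lambda)}\,\|D_n\|_{\ell^2_A}$.

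For $\lambda\in[1/2,1)$, the $H^2$ norm is explicit. Since $|b_\lambda|=1$ on the circle,
\[
\|f_{n,\lambda}\|_{\ell^2_A}^2=\int_{\mathbb T}\frac{|dz|/2\pi}{|1-\lambda z|^2}=\frac1{1-\lambda^2},
\]
so $\|f_{n,\lambda}\|_{\ell^2_A}\asymp (1-\lambda)^{-1/2}$. It therefore suffices to show $\sum_k|\hat f_{n,\lambda}(k)|\gtrsim \sqrt n/(1-\lambda)$ for $n\ge n(\lambda)$.

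The coefficients are
\[
\hat f(k)=\frac1{2\pi}\int_{-\pi}^{\pi}\frac{e^{i(n-1)\varphi(\theta)-ik\theta}}{1-\lambda e^{i\theta}}\,d\theta ,
\]
where $b_\lambda(e^{i\theta})=e^{i\varphi(\theta)}$ and $\varphi'(\theta)=P_\lambda(\theta)=\frac{1-\lambda^2}{|1-\lambda e^{i\theta}|^2}$ is the Poisson kernel. Its range is $[\frac{1-\lambda}{1+\lambda},\frac{1+\lambda}{1-\lambda}]$. I would apply the method of stationary phase to the phase $\Phi(\theta)=(n-1)\varphi(\theta)-k\theta$. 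For $k=(n-1)t$ with $t$ in the interior of this range, say $t\in[2,\frac{1}{2}\cdot\frac{1+\lambda}{1-\lambda}]$ (nonempty once $\lambda$ is close enough to $1$; for moderate $\lambda$ use any fixed compact subinterval, since constants depending on $\lambda$ are harmless there as long as they are uniform on $[1/2,1-\delta]$), there are two symmetric stationary points $\pm\theta_t$ where $P_\lambda(\theta_t)=t$. Each contributes amplitude of size
\[
\frac{1}{|1-\lambda e^{i\theta_t}|}\,\bigl((n-1)|P_\lambda'(\theta_t)|\bigr)^{-1/2}.
\]
Using $|1-\lambda e^{i\theta}|^2=(1-\lambda^2)/t$ and computing $P_\lambda'$ in terms of $t$, I expect
\[
|\hat f(k)|\approx \frac{C\,g(t)}{\sqrt{n}}\,\bigl|\cos(\text{phase difference})\bigr|
\]
with an explicit $g$. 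Summing over $k$ then amounts to integrating over $t$ with $dk=n\,dt$, giving about $\sqrt n\int g(t)\,|\cos(\cdot)|\,dt$. The oscillating cosine factor has mean absolute value $2/\pi$ over $k$, since the phase difference varies with $k$ at rate $2\theta_t$, which is not too small.

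The key quantitative point is that $\int g(t)\,dt\gtrsim (1-\lambda)^{-1}$. Heuristically, $t$ ranges up to $\sim(1-\lambda)^{-1}$ and $g$ should be of order $1$ on a large portion of that range, given the $1/|1-\lambda e^{i\theta}|$ amplitude. I would verify this by writing $s=\theta/(1-\lambda)$ in the regime $\theta\lesssim 1-\lambda$, where $P_\lambda\approx \frac{2(1-\lambda)^{-1}}{1+s^2}$ and the computation becomes scale invariant.

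The main obstacle is making the stationary phase error terms uniform in $\lambda$. There are three difficulties. First, the amplitude $1/(1-\lambda e^{i\theta})$ and the phase derivatives become large near $\theta=0$ at scale $1-\lambda$. Second, the stationary points coalesce near the endpoints of the range of $t$, where Airy-type behavior occurs. Third, the two stationary points must be prevented from cancelling. My first approach would be to rescale $\theta=(1-\lambda)s$ near $0$ and treat the far region separately. I would restrict $t$ to a compact subrange away from the extremes and away from the Airy turning points, then use a standard stationary phase lemma with explicit remainder $O(((n-1)\,\cdot\,\text{scale})^{-1})$ relative to the main term. This remainder is small once $n\ge n(\lambda)$, which is exactly why the theorem allows an $n(\lambda)$ threshold. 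Finally, I would drop the remaining $k$ by positivity, bound $|\cos|$ from below on a positive proportion of $k$ in the subrange, and combine with the $H^2$ computation.
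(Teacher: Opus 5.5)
Your overall route is the paper's. You use stationary phase at the two symmetric critical points for $t=k/n$ in a compact subinterval of $(\alpha,\alpha^{-1})$. This gives $|\widehat{f}(k)|\approx\sqrt{2/(\pi n(1-\lambda^2))}\,r(t)^{-1/2}\,|\cos\phi_k|$, with $r(t)=\sqrt{(t-\alpha)(\alpha^{-1}-t)}$ and $\phi_k=nF(k/n)-\psi(k/n)-\pi/4$. Your $g$ is $((1-\lambda^2)r)^{-1/2}$, and your claim $\int g\,dt\gtrsim(1-\lambda)^{-1}$ is correct.

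The gap is the treatment of the factor $|\cos\phi_k|$, which is exactly what Steps 2--3 of the paper are for. You assert that it has mean $2/\pi$ because the phase moves at a rate that is ``not too small'', and later that $|\cos\phi_k|$ is bounded below on a positive proportion of $k$. Neither follows from the size of the step $\phi_{k+1}-\phi_k\approx-\theta_t$.

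First, a step bounded away from $0$ does not give the mean $2/\pi$: a locally constant step commensurable with $\pi$ gives a periodic sequence. Second, and more seriously, it cannot give an absolute constant here. On the part of the range carrying the mass, $t\asymp\alpha^{-1}$, one has $\theta_t\asymp 1-\lambda$. A pigeonhole argument of the form ``of two consecutive $k$, one has $|\cos\phi_k|\ge\sin(\theta_t/3)$'' then only yields a factor $\asymp 1-\lambda$. That gives $\sum_k|\widehat f(k)|\gtrsim\sqrt n$, losing exactly the factor $(1-\lambda)^{-1}$ you need. Conversely, where $\theta_t$ is bounded below by an absolute constant, one has $t\lesssim 1-\lambda$, and that part of the range contributes only $O(\sqrt{n(1-\lambda)})$.

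The missing ingredient is a genuine equidistribution statement for $\phi_k$ modulo $2\pi$, or at least a uniform averaging statement. The paper gets it from van der Corput's second-derivative test. Since $F'(t)=-\varphi(t)$ and $F''(t)=1/(tr(t))\ge c(\lambda)>0$ on $[\beta,\beta^{-1}]$, one gets $\sum_{k\in I_n,\,k\le m}e^{ij\phi_k}=O_\lambda(n^{1/2})$ for each $j\ne 0$, uniformly in $m$. The paper then approximates $|\cos 2\pi x|$ uniformly by trigonometric polynomials and sums by parts against the bounded-variation weights $r(k/n)^{-1/2}$. This shows the weighted average of $|\cos\phi_k|$ tends to $2/\pi$.

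Because this is a limit as $n\to\infty$ with $\lambda$ fixed, the constant $2/\pi$ is absolute, and every $\lambda$-dependent constant is absorbed into $n(\lambda)$. An elementary alternative would be to count separately in two kinds of blocks: where $\theta_t$ is close to $0$ or $\pi$ (the phase then rotates slowly and monotonically modulo $\pi$), and where it is not (pigeonhole). That argument would have to be written out.

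The same observation disposes of what you call the main obstacle. No uniformity in $\lambda$ of the stationary-phase remainders is needed, so the rescaling $\theta=(1-\lambda)s$ and the Airy regions never enter. The paper simply fixes $\lambda$, picks $\beta=\beta(\lambda)$ with $\int_\beta^{\beta^{-1}}r(t)^{-1/2}\,dt=\Gamma(3/4)^2/\sqrt{1-\lambda}$, and applies Fedoryuk's theorem on $[\beta,\beta^{-1}]$.
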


$\,$

We proceed with two remarks.

\begin{rmk}
The first part of Theorem \ref{Nik_type_sharpness}, whose proof is
delicate, shows the asymptotic sharpness (up to a numerical prefactor)
of the embedding coefficient $\sqrt{\frac{n}{1-\lambda}}$ in Theorem
\ref{Nik_type_ineq} as $n\to\infty$ for any fixed $\lambda\in[1/2,1)$.
The second part of Theorem \ref{Nik_type_sharpness} (the case $\lambda\in[0,1/2]$)
is trivial since $\|D_n\|_{\ell_A^1}=\sqrt{n}\|D_n\|_{\ell_A^2}$
and shows the sharpness of the inequality from Theorem \ref{Nik_type_ineq}
also in this case. 
\end{rmk}

\begin{rmk}
This asymptotic lower bound is of the same nature as a result established
in \cite{ZR}, where the sharp growth rate for a Bernstein-type inequality
in the Hardy space $H^{2}$ was proved for rational functions with
prescribed pole locations. In both settings, the goal is to determine
the asymptotic behavior, as $n\to\infty$, along with the correct
dependence on the pole constraint parameter --- denoted by $\lambda$. 
In \cite{ZR}, the sharp constant behaves like $1/(1-\lambda)$,
while in the present setting it is $1/\sqrt{1-\lambda}$, reflecting
the difference between the norms involved: the norm of the differentiation
operator in the Hilbert space $H^{2}$, versus the norm of the embedding
$H^{2}\hookrightarrow W$ into the Wiener algebra. The present case
is notably more challenging, since the Wiener norm $\|f\|_{\ell_{A}^{1}}$
is defined as the sum over the moduli of the Taylor coefficients, and lacks
the Hilbertian structure of $H^{2}$. In particular, the orthogonality
of the Malmquist--Walsh basis in the model space $K_{B}=H^{2}\ominus BH^{2}$
\cite[p. 157]{NN}, where $B$ is a finite Blaschke product encoding
the location and multiplicity of the poles, played a central role
in \cite{ZR}, and is unavailable here. Still, it is worth noting that
the test function $f_{n,\lambda}=\frac{1}{1-\lambda z}b_{\lambda}^{n-1}$
used in our paper coincides with the $n$-th vector of the Malmquist--Walsh
basis associated with the Blaschke product $B(z)=b_{\lambda}^{n}(z)$,
thereby highlighting a structural similarity between the two problems.
\end{rmk}
Fix $\lambda\in[1/2,1)$ and put
\[
\alpha=\frac{1-\lambda}{1+\lambda}\in(0,1/3].
\]
Given $k$ and $n$ we set $t=k/n$. Next, we set 
\begin{gather*}
f_n(z)=\frac{1}{1-\lambda z}b_{\lambda}^n(z),\\
\Phi(z)=
\log\left(z^{-t}b_{\lambda}(z)\right),\label{Phi}
\end{gather*}
where $\log$ denotes 
the principal branch of the complex logarithm. 
To state the next
theorem, which is obtained as an application of the method of stationary
phase \cite{Erd}, as well as a uniform version of this method \cite{BlHa,BV,CFU},
we need to introduce an auxiliary function 
$$
h(s)=h_t(s)=
-i\Phi(e^{is})=\arg\left(\left(z^tb_{\lambda}(z)\right)_{\vert z=e^{is}}\right)\qquad s\in[0,\pi].
$$
Then
\begin{equation}
h''(s)=-\frac{2\lambda(1-\lambda^2)\sin s}{(1+\lambda^2-2\lambda\cos s)^2}.
\label{four4}
\end{equation}

Since $|z^tb_{\lambda}(z)|=1$ for $z\in\partial\mathbb{D}$,
we have 
$$
\widehat{f_n}(k)=\frac{1}{\pi}\Re \Bigl\{ \int_{0}^{\pi}\frac{1}{1-\lambda e^{is}}e^{inh
(s)}\,{\rm d}s\Bigr\}.
$$

Given $t\in(\alpha,\alpha^{-1})$, we 
set
\begin{equation}
q(t)=\frac{1+\lambda^2}{2\lambda}-\frac{1-\lambda^2}{2\lambda t},
\label{h1xx}
\end{equation}

so that $|q(t)|<1$, 
and define $\varphi(t)$, $0<\varphi(t)<\pi$, 
by the relation
\begin{equation}
\cos(\varphi(t))=q(t). \label{h1}
\end{equation}
Next, we define $\psi(t)$, $-\pi/2<\psi(t)<0$, by the relation  
\begin{equation}
1-\lambda e^{i\varphi(t)}
=|1-\lambda e^{i\varphi(t)}|e^{i\psi(t)}.
\label{h2}
\end{equation}
Finally, we consider the function 
\begin{equation}
F(t)=h_t(\varphi(t))=-i\log(e^{-it\varphi(t)}e^{i\varphi(t)-2i\psi(t)})=\varphi(t)-t\varphi(t)-2\psi(t).
\label{h3}
\end{equation}
 
We need the following result whose proof is rather technical and which is a natural modification of the proof of \cite[Theorem 2~(2)]{BFZ} whose purpose
is to compute the asymptotic expansion of $\widehat{b_{\lambda}^{n}}(k)$ when $t=k/n$ is fixed and $n$ grows large.

\begin{thm}
\label{thm:3} Let $\alpha<\beta<1$. If $k\in[\beta n,\beta^{-1}n]$, 
then 
\[
\widehat{f_n}(k)=\sqrt{\frac{2}{n(1-\lambda^{2})\pi}}\frac{\cos\left(nF(t)-\psi(t)-\pi/4\right)+o(1)}{[(\alpha^{-1}-t)(t-\alpha)]^{1/4}},\qquad n\to\infty, 
\]
where $t=k/n$, $F$ is defined by \eqref{h3}, and $\psi$ is defined by \eqref{h2}. The remainder term $o(1)$ is uniform in $t\in[\beta,\beta^{-1}]$.
\end{thm}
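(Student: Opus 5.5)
We apply the method of stationary phase to the integral representation
\[
\widehat{f_n}(k)=\frac{1}{\pi}\Re\Bigl\{\int_0^\pi \frac{e^{inh_t(s)}}{1-\lambda e^{is}}\,{\rm d}s\Bigr\},
\]
and we do it uniformly in $t\in[\beta,\beta^{-1}]$.

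\textbf{Stationary point.} Differentiating $\arg b_\lambda(e^{is})$ gives
\[
h_t'(s)=-t+\frac{1-\lambda^2}{1+\lambda^2-2\lambda\cos s}.
\]
Setting $h_t'(s)=0$ gives exactly $\cos s=q(t)$. Since $t\in(\alpha,\alpha^{-1})$ forces $|q(t)|<1$, there is a unique stationary point $s=\varphi(t)\in(0,\pi)$. It stays in a compact subinterval of $(0,\pi)$ uniformly for $t\in[\beta,\beta^{-1}]$, because $q(t)$ stays in a compact subset of $(-1,1)$.

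\textbf{Value of the phase and the amplitude.} At $s=\varphi$ we use $e^{i\varphi}-\lambda=-\overline{(1-\lambda e^{i\varphi})}\,e^{i\varphi}\cdot(-1)$ up to sign conventions. Equivalently, $b_\lambda(e^{i\varphi})=e^{i\varphi}\,\overline{(1-\lambda e^{i\varphi})}/(1-\lambda e^{i\varphi})$. This gives $h_t(\varphi)=\varphi-t\varphi-2\psi=F(t)$, as in \eqref{h3}; I would check the branch of the argument carefully. The amplitude at the stationary point is
\[
\frac{1}{1-\lambda e^{i\varphi}}=\frac{e^{-i\psi}}{|1-\lambda e^{i\varphi}|},
\]
which is where the phase shift $-\psi(t)$ comes from.

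\textbf{Second derivative.} By \eqref{four4}, $h''(\varphi)<0$, so the stationary phase contributes $e^{-i\pi/4}$. Its size is
\[
|h''(\varphi)|=\frac{2\lambda(1-\lambda^2)\sin\varphi}{d^2},\qquad d=|1-\lambda e^{i\varphi}|^2=1+\lambda^2-2\lambda\cos\varphi=\frac{1-\lambda^2}{t}.
\]
The standard formula gives the main term
\[
\frac{1}{\pi}\sqrt{\frac{2\pi}{n|h''|}}\,\frac{\cos\bigl(nF-\psi-\pi/4\bigr)}{\sqrt{d}}.
\]
Substituting, $\sqrt{d}\sqrt{|h''|}=\sqrt{2\lambda(1-\lambda^2)\sin\varphi/d}=\sqrt{2\lambda t\sin\varphi}$. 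Next,
\[
\sin^2\varphi=1-q^2=(1-q)(1+q),\qquad 1-q=\frac{(1-\lambda)^2}{2\lambda}\Bigl(\frac{1+\lambda}{(1-\lambda)t}\cdot\frac{1-\lambda}{1+\lambda}\cdots\Bigr).
\]
Computing directly, $2\lambda t(1-q)=(1-\lambda^2)-t(1-\lambda)^2=(1-\lambda)^2(\alpha^{-1}-t)$ and $2\lambda t(1+q)=t(1+\lambda)^2-(1-\lambda^2)=(1+\lambda)^2(t-\alpha)$. Hence
\[
2\lambda t\sin\varphi=(1-\lambda^2)\sqrt{(\alpha^{-1}-t)(t-\alpha)},
\]
and the constant becomes $\sqrt{2/(\pi n(1-\lambda^2))}\,[(\alpha^{-1}-t)(t-\alpha)]^{-1/4}$, as claimed.

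\textbf{Error terms and uniformity.} This is the main obstacle. First, the endpoints $s=0$ and $s=\pi$ contribute $O(1/n)$ by integrating by parts, since $|h_t'|$ is bounded below near the endpoints uniformly in $t$. Since the endpoint contributions may be complex, I would confirm that $O(1/n)=o(n^{-1/2})$ is enough; it is. Second, the local stationary-phase error is $O(n^{-3/2})$ or at least $o(n^{-1/2})$, uniformly, because $|h''(\varphi(t))|$ is bounded below and $h_t$ together with the amplitude have derivatives bounded uniformly over the compact $t$-range.

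To make this rigorous, I would follow the proof of \cite[Theorem 2~(2)]{BFZ}, in the uniform version from \cite{BlHa,CFU}. Concretely: use a smooth partition of unity isolating a fixed neighbourhood of $\varphi(t)$; away from $\varphi(t)$, integrate by parts; near $\varphi(t)$, apply a Morse change of variables $h_t(s)-F(t)=-u^2/2$, whose Jacobian is controlled uniformly in $t$. The genuine difficulty is keeping every constant uniform as $t$ ranges over $[\beta,\beta^{-1}]$, which holds precisely because $\beta>\alpha$ keeps the stationary point away from the endpoints $0$ and $\pi$.
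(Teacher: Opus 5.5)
Your proposal is correct and takes essentially the same route as the paper. Both use the representation $\widehat{f_n}(k)=\frac1\pi\Re\int_0^\pi$, a cutoff with integration by parts near $s=0$ and $s=\pi$ (where $|h_t'|$ is uniformly bounded below), and Fedoryuk's uniform stationary-phase theorem at the unique nondegenerate critical point $\varphi(t)$, with the same computation of the constant. The only difference is cosmetic: the paper conjugates the integrand and works with $\tilde h_t=-h_t$ (so $\tilde h_t''(\varphi(t))=t\,r(t)>0$ and the factor is $e^{i\pi/4}$), which yields the same $\cos(nF(t)-\psi(t)-\pi/4)$; your garbled line for $1-q$ can simply be dropped in favour of the direct computation that follows it.
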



\subsection*{Outline of the paper}

The paper is organized as follows.  
In Section~2 we prove Theorem~2, which establishes the asymptotic sharpness of the Nikolskii-type inequality stated in Theorem~1.  
The proof is based on an explicit analysis of the $\ell_A^1$-norm of the test function $f_n$, using the asymptotic expansion of its Fourier coefficients.  
After inserting the asymptotic formula of Theorem~3, we sum the dominant terms over a carefully chosen interval of frequencies $k$ corresponding to a subinterval of $t=k/n \in [\alpha,\,\alpha^{-1}]$.  
The resulting series is a highly oscillatory "pseudo-Riemann sum", in which the cosine terms encode the oscillations of the phase.  
To control these oscillations, we rely on Weyl's equidistribution criterion, which guarantees that the phase behaves like an equidistributed sequence modulo~$2\pi$ and allows us to estimate the averaged contribution of the terms.  This leads to the desired lower bound, showing that the Nikolskii-type inequality is asymptotically sharp.

Section~3 is devoted to the proof of Theorem~3, which provides a precise asymptotic expansion of the Fourier coefficients $\widehat{f_n}(k)$ as $n\to\infty$.  
The proof relies on the method of stationary phase applied to the contour integral representation of $\widehat{f_n}(k)$.  
When $t=k/n$ remains in a compact subset of $(\alpha,\alpha^{-1})$, we apply the classical stationary phase theorem of Fedoryuk~\cite{F1}.  

\section{Proof of Theorem ~\ref{Nik_type_sharpness}}
\begin{proof}[Proof of Theorem~\ref{Nik_type_sharpness}]
To simplify the notation, we replace $n-1$ by $n$ and consider the test function 
\[
f_n(z)=\frac{1}{1-\lambda z}b_{\lambda}^{n}(z).
\]
We will estimate from below the norm of $f_n$ in $\ell_{A}^{1}$. 

\subsection*{Step 1. {\it Application of Theorem \ref{thm:3} and summation
over a suitable range of $k$. }}
\medskip  
\phantom{A}
\medskip  

A direct computation shows that
\[
\int_{\alpha}^{\alpha^{-1}}\frac{{\rm d}t}{[(\alpha^{-1}-t)(t-\alpha)]^{1/4}}=(2\Gamma(3/4))^{2}\sqrt{\frac{\lambda}{\pi(1-\lambda^{2})}},
\]
and we can choose $\beta\in(\alpha,1)$ such that
$$
\int_{\beta}^{\beta^{-1}}\frac{{\rm d}t}{[(\alpha^{-1}-t)(t-\alpha)]^{1/4}}=\frac{L}{\sqrt{1-\lambda}},
$$
where $L=(\Gamma(3/4))^2$.

We define $I_{n}$ to be the following set of integers 
\[
I_{n}=\mathbb{Z}\cap[\beta n,\beta^{-1}n],
\]
and observe that, obviously, 
\[
\|f\|_{\ell_{A}^{1}}\geq\sum_{k\in I_{n}}\abs{\widehat{f}(k)}.
\]

A direct application of Theorem \ref{thm:3} gives
\begin{equation}
\sum_{k\in I_{n}}\abs{\widehat{f}(k)}\geq\sqrt{\frac{2}{n(1-\lambda^{2})\pi}}\sum_{k\in I_{n}}\frac{\abs{\cos(nF(k/n)-\psi(k/n)-\pi/4)}+o(1)}
{[(\alpha^{-1}-k/n)(k/n-\alpha)]^{1/4}}.
\label{h51}
\end{equation}
We set 
\begin{equation}
M=\sum_{k\in I_{n}}\frac{1}{\left[(\alpha^{-1}-k/n)(k/n-\alpha)\right]^{1/4}}. 
\label{h20}
\end{equation}
Comparing with the corresponding integral we obtain 
$$
M= n\int_{\beta}^{\beta^{-1}}\frac{{\rm d}t}{\left[(\alpha^{-1}-t)(t-\alpha)\right]^{1/4}}+\cO(1),\qquad n\to\infty.
$$
Therefore, 
\begin{equation}
M= \Bigl(\frac{L}{\sqrt{1-\lambda}}+o(1)\Bigr)n,\qquad n\to\infty. 
\label{h28}
\end{equation}

From now on, our aim is to estimate asymptotically 
\[
X=\sum_{k\in I_{n}}\frac{\abs{\cos\left(nF(k/n)-\psi(k/n)-\pi/4\right)}}{\left[(\alpha^{-1}-k/n)(k/n-\alpha)\right]^{1/4}}
\]
as $n$ grows large. We set
$$
s_{n,k}=\Bigl\{\frac{1}{2\pi}\bigl(nF(k/n)-\psi(k/n)-\pi/4\bigr)\Bigr\}\in[0,1),
$$
where $\bigl\{a\bigr\} $ denotes the fractional part of $a$, so that
\begin{equation}
X=\sum_{k\in I_{n}}\frac{|\cos (2\pi s_{n,k})|}{\left[(\alpha^{-1}-k/n)(k/n-\alpha)\right]^{1/4}}.
\label{h21}
\end{equation}

\subsection*{Step 2. {\it Equidistribution of }$s_{n,k}$. }
\medskip  
\phantom{A}
\medskip   

Given $j\in\mathbb Z\setminus\{0\}$ we set
\[
A_{n,j,m}=\sum_{l\in I_n,\,l\le m}\exp(2\pi ijs_{n,l})=\sum_{l\in I_n,\,l\le m}\exp(2\pi ij\phi(l/n)),
\]
where
\[
\phi=nF-\psi-\pi/4.
\]
We will show that, for fixed $j$ and uniformly in $m\in I_{n}$,  
\[
A_{n,j,m}=\cO(n^{1/2}),\qquad n\to\infty.
\]
To estimate $A_{n,j,m}$ we use a van der Corput lemma \cite[Ch. 5, Lemma 4.6]{ZA}:
if $\theta''(x)\geq\mu>0$ or $\theta''(x)\leq-\mu<0$ on $[a,b]$, then 
\begin{equation}
\Bigl|\sum_{a<k\le b}\exp(2\pi i\theta(k))\Bigr|\le \bigl(\abs{\theta'(b)-\theta'(a)}+2\bigr)\Bigl(\frac{4}{\sqrt{\mu}}+C\Bigr),\label{eq:VDCP}
\end{equation}
where $C$ is an absolute constant. We apply this lemma to the function
$\theta(x)=j\phi(x/n)$, and the numbers $a=\beta n$, $b=m\le \beta^{-1} n$. 

We have 
$$
\theta'(x)=\frac{j}{n}\phi'(\frac{x}{n})=jF'(x/n)-\frac{j}{n}\psi'(x/n)
$$ 
and 
$$
\theta''(x)=\frac{j}{n^2}\phi''(x/n)=\frac{j}{n}F''(x/n)-\frac{j}{n^2}\psi''(x/n).
$$

We set $t=x/n$. When $a\le x\le b$ we have $t\in(\beta,\beta^{-1})$.
Let us recall that 
\begin{gather}
q(t)=\frac{1+\lambda^2}{2\lambda}-\frac{1-\lambda^2}{2\lambda t},\notag\\
\varphi(t)=\arccos q(t),\notag\\
1-\lambda e^{i\varphi(t)}=|1-\lambda e^{i\varphi(t)}|e^{i\psi(t)},\qquad -\pi/2<\psi(t)<0,\label{seven7}\\
F(t)=\varphi(t)-t\varphi(t)-2\psi(t).\notag
\end{gather}

We define 
$$
r(t)=\sqrt{(t-\alpha)(\alpha^{-1}-t)}.
$$
Then
$$
r(t)=\frac{2\lambda t}{1-\lambda^2}\sqrt{1-q^2(t)},
$$
and
\begin{align}
q'(t)&=\frac{1-\lambda^2}{2\lambda t^2},\notag\\
r'(t)&=\frac1{r(t)}\Bigl(\frac{1+\lambda^2}{1-\lambda^2}-t\Bigr).\label{h8}
\end{align}
By the definition of $\varphi$, we have
$$
\varphi'(t)=-\frac{q'(t)}{\sqrt{1-q^2(t)}}=-\frac{1-\lambda^2}{2\lambda t^2\sqrt{1-q^2(t)}}=-\frac1{tr(t)}.
$$

Furthermore,
\begin{align*}
|1-\lambda e^{i\varphi(t)}
|&=\sqrt\frac{1-\lambda^2}{t},\\
1-\lambda e^{i\varphi(t)}
&=(1-\lambda^2)\frac{t+1}{2t}-i\lambda \sqrt{1-q^2(t)}.
\end{align*}

By the definition of $\psi$, we have 
$$
\tan \psi(t)=-\frac{2t\lambda\sqrt{1-q^2(t)}}{(t+1)(1-\lambda^2)}=-\frac{r(t)}{t+1}.
$$
Therefore, 
\begin{multline*}
\psi'(t)=\frac{1}{1+r^2(t)/(t+1)^2}\cdot\frac{r(t)-r'(t)(t+1)}{(t+1)^2}=\frac{r(t)-r'(t)(t+1)}{r^2(t)+(t+1)^2}\\
=\frac{r^2(t)-(t+1)(\frac{1+\lambda^2}{1-\lambda^2}-t)}{r(t)(r^2(t)+(t+1)^2)}=\frac{-1-t^2+2\frac{1+\lambda^2}{1-\lambda^2}t-t\frac{1+\lambda^2}{1-\lambda^2}-\frac{1+\lambda^2}{1-\lambda^2}+t^2+t}{r(t)(2\frac{1+\lambda^2}{1-\lambda^2}t+2t)}\\
=\frac{\frac{2}{1-\lambda^2}t-\frac{2}{1-\lambda^2}}{4r(t)t/(1-\lambda^2)}=\frac{t-1}{2r(t)t},
\end{multline*}
and 
$$
\psi''(t)=
\frac{r(t)+tr'(t)-t^2r'(t)}{2t^2r^2(t)}.
$$
 
Next, we have
$$
F'(t)=\varphi'(t)-\varphi(t)-t\varphi'(t)-2\psi'(t)
=-\varphi(t),
$$
and
$$
F''(t)=\frac{1}{tr(t)}.
$$

Since $t\in[\beta,\beta^{-1}]$, 
we have 
\begin{equation}
r(t)\asymp 1.
\label{h32}
\end{equation}
Here and below, since $\beta=\beta(\lambda)$ depends on $\lambda$, the implicit constants in \eqref{h32} (and in subsequent $\lesssim$/$\cO(\cdot)$ bounds that use it) may depend on $\lambda$, but are uniform in $t\in[\beta,\beta^{-1}]$ for fixed $\lambda$.

By \eqref{h8} we have
$$
|r'(t)|\lesssim \frac1{r(t)}.
$$
Therefore,
$$
\Bigl|\frac1j \theta'(x)\Bigr|=\Bigl|F'(t)-\frac{1}{n}\psi'(t)\Bigr|=\Bigl|\varphi(t)+\frac{t-1}{2ntr(t)}\Bigr|=\cO(1),
$$
and
$$
\frac{n}j \theta''(x)=F''(t)-\frac{1}{n}\psi''(t)=\frac{1}{tr(t)}\Bigl[1-\frac{r(t)+tr'(t)-t^2r'(t)}{2ntr(t)}\Bigr].
$$
Since
$$
\frac{|r(t)+tr'(t)-t^2r'(t)|}{2ntr(t)}=\cO_{\lambda}(n^{-1}),\qquad n\to\infty,
$$
for $n$ large enough we obtain that 
$$
\frac{n}j \theta''(x)\ge c(\lambda)>0, \qquad x\in[a,b],\,j\not=0.
$$
Now an application of van der Corput's inequality 
yields that, for fixed $\lambda$, $j\neq0$, and uniformly in $m\in I_{n}$, we have  
\begin{equation}
A_{n,j,m}=\cO(n^{1/2}),\qquad n\to\infty. 
\label{h30}
\end{equation}

\subsection*{Step 3. {\it Approximation by trigonometric polynomials
and Abel's transformation. }}
\medskip  
\phantom{A}
\medskip

Our argument here is analogous to that in the proof of \cite[Lemma 3]{GD}. Let 
$g(x)=|\cos(2\pi x)|$. We want to prove that 
\begin{equation}
\lim_{n\to\infty}\frac{X}{M} =\int_{0}^{1}g(x)\,{\rm d}x=\frac2\pi,
\label{h25}
\end{equation}
where $M$ is defined by \eqref{h20} and $X$ is given by \eqref{h21}. We observe that $g$
is continuous and 1-periodic on the real line. Fix $\varepsilon>0$.
By the Weierstrass approximation theorem, 
there exists a trigonometric
polynomial $p$,
\[
p(x)=\sum_{\abs{j}\leq N}c_{j}\exp(2\pi ijx)
\]
such that $\|g-p\|_{L^\infty[0,1]}\le\varepsilon$.
Therefore,
$$
\Bigl|X-\sum_{k\in I_{n}}\frac{p (s_{n,k})}{\left[(\alpha^{-1}-k/n)(k/n-\alpha)\right]^{1/4}}\Bigr|\le 
\varepsilon\sum_{k\in I_{n}}\frac{1}{\left[(\alpha^{-1}-k/n)(k/n-\alpha)\right]^{1/4}}=\varepsilon M,
$$
and
$$
\Bigl|\int_0^1g(x)\,{\rm d}x-c_0\Bigr|\le \varepsilon.
$$

Next,
$$
\sum_{k\in I_{n}}\frac{p (s_{n,k})}{\left[(\alpha^{-1}-k/n)(k/n-\alpha)\right]^{1/4}}=\sum_{k\in I_{n}}\frac{p (s_{n,k})}{\sqrt{r(k/n)}}=
\sum_{|j|\le N}c_{j}\sum_{k\in I_{n}}\frac{\exp(2\pi ij s_{n,k})}{\sqrt{r(k/n)}},
$$
and
$$
c_0\sum_{k\in I_{n}}\frac{1}{\sqrt{r(k/n)}}=c_0M.
$$

Therefore, to get \eqref{h25}, it remains to verify that for every fixed $j\ne 0$, we have 
$$
\sum_{k\in I_{n}}\frac{\exp(2\pi ij s_{n,k})}{\sqrt{r(k/n)}}=o(M),\qquad n\to\infty,
$$
or, equivalently, 
\begin{equation}
Y=\sum_{k\in I_{n}}\frac{A_{n,j,k}-A_{n,j,k-1}}{\sqrt{r(k/n)}}=o(M),\qquad n\to\infty.
\label{h40}
\end{equation}

By Abel's summation formula we have
$$
Y=
\frac{A_{n,j,\beta^{-1}n}}{\sqrt{r([\beta^{-1}n]/n)}}-
\sum_{k\in I_n}A_{n,j,k-1}\Bigl(\frac1{\sqrt{r(k/n)}}-\frac1{\sqrt{r((k-1)/n)}}\Bigr).
$$

By \eqref{h30}, we obtain that 
$$
|Y|\lesssim n^{1/2}
\Bigl(\frac{1}{\sqrt{r(\beta^{-1})}}+
\sum_{k\in I_n} \Bigl|\frac1{\sqrt{r(k/n)}}-\frac1{\sqrt{r((k-1)/n)}}\Bigr|\Bigr).
$$
Now we use \eqref{h32} and the fact that the function $\sqrt{r}$ is monotonic on two intervals whose union is $(\alpha n,\alpha^{-1}n)$. 
Therefore, 
$$
\frac{1}{\sqrt{r(\beta^{-1})}}+
\sum_{k\in I_n} \Bigl|\frac1{\sqrt{r(k/n)}}-\frac1{\sqrt{r((k-1)/n)}}\Bigr|=\cO(1),\qquad n\to\infty.
$$ 
Together with \eqref{h28} this gives \eqref{h40}, and, hence, \eqref{h25}.

\subsection*{Step 4. {\it Conclusion.}}
\medskip  
\phantom{A}
\medskip   

Relations \eqref{h51}, \eqref{h28}, and \eqref{h25} give us that
$$
\sum_{k\in I_n}|\widehat{f}(k)| \ge \frac{(L+o(1))\sqrt{n}}{\sqrt{1-\lambda}},\qquad n\to\infty.
$$
As a consequence,
\[
\|f\|_{\ell^1_A}\ge\kappa\frac{\sqrt{  n}}{1-\lambda},\qquad n\ge n(\lambda),
\]
for an absolute constant $\kappa>0$,
which completes the proof since $\|f\|_{\ell^2_A}\asymp\frac{1}{\sqrt{1-\lambda}}$. 
\end{proof}
\section{\label{sec:Proof-of-Theorem3}Proof of Theorem 3}

We first recall the definition of the Fourier/Taylor coefficients
of $f_{n}$: 
$$
\widehat{f_{n}}(k) 
=\frac{1}{2\pi i}\oint_{\mathbb{T}}\frac{b_{\lambda}^{n}(z)}{1-\lambda z}\,z^{-k}\,\frac{{\rm d}z}{z}
 =\frac{1}{2i\pi}\int_{\mathbb{T}}\frac{1}{1-\lambda z}\,e^{n\Phi(z)}\,\frac{{\rm d}z}{z}.
$$
To analyze their asymptotic behavior as $n\to+\infty$, we shall express this contour integral 
as the real part of an oscillatory integral
over $(0,\pi)$, after a passage to the conjugate form. In this regime
we apply a \textit{stationary phase theorem}, namely Fedoryuk\textquoteright s
result \cite[Theorem 1.6 p.107]{F1}, stated below in our notation.
%
%
%
Before proceeding to the detailed analysis, 
it is convenient
to recall the structure of the stationary points of the phase function
\[
\Phi(z)=\Phi_{t}(z)=\log\left(z^{-t}b_{\lambda}(z)\right),\qquad t=k/n,
\]
which governs the oscillatory behavior of the integral above. 
The following description of the stationary points was first established in~\cite[Lemma~11]{SzZa4} and later refined in~\cite[Lemma~5]{BFZ}. 
For completeness we restate it here.
\begin{lem} \textup{\cite[Lemma 11]{SzZa4} } \textup{\cite[Lemma 5]{BFZ} }
\label{lem:critical_pts_f} Let $t=k/n\in(\alpha,\,\alpha^{-1})$, let $\Phi$
be defined as above, and let $\varphi$ be defined by \eqref{h1}. 
Then $\Phi$ has two
distinct stationary points $z_{\pm}=e^{\pm i\varphi(t)}$ of order
one
and 
$$
\Phi''(z_{\pm})=
\frac{(1-\lambda^{2})(z_{\pm}-z_{\mp})\lambda}{(z_{\pm}-\lambda)^{2}(1-\lambda z_{\pm})^{2}}.
$$
\end{lem}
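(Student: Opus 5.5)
The plan is to locate the stationary points by differentiating $\Phi$ directly. The derivative of any local branch of $\log\left(z^{-t}b_{\lambda}(z)\right)$ is the logarithmic derivative, so the choice of branch does not matter near $\mathbb{T}$. Since
\[
\frac{b_{\lambda}'(z)}{b_{\lambda}(z)}=\frac{1}{z-\lambda}+\frac{\lambda}{1-\lambda z}=\frac{1-\lambda^{2}}{(z-\lambda)(1-\lambda z)},
\]
we get
\[
\Phi'(z)=-\frac{t}{z}+\frac{1-\lambda^{2}}{(z-\lambda)(1-\lambda z)}=\frac{P(z)}{z(z-\lambda)(1-\lambda z)},\qquad P(z)=(1-\lambda^{2})z-t(z-\lambda)(1-\lambda z).
\]
Expanding $P$ gives
\[
P(z)=t\lambda z^{2}-\bigl(t(1+\lambda^{2})-(1-\lambda^{2})\bigr)z+t\lambda=t\lambda\bigl(z^{2}-2q(t)z+1\bigr),
\]
with $q$ as in \eqref{h1xx}. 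Away from $z\in\{0,\lambda,1/\lambda\}$, the stationary points of $\Phi$ are therefore exactly the roots of $z^{2}-2qz+1=0$.

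Next I would check that $|q(t)|<1$ on $(\alpha,\alpha^{-1})$. The function $q$ is strictly increasing in $t$, since $q'(t)=\frac{1-\lambda^2}{2\lambda t^2}>0$. At the endpoints,
\[
q(\alpha)=\frac{(1+\lambda^{2})-(1+\lambda)^{2}}{2\lambda}=-1,\qquad q(\alpha^{-1})=\frac{(1+\lambda^{2})-(1-\lambda)^{2}}{2\lambda}=1.
\]
Hence $|q(t)|<1$, and the roots are $q\pm i\sqrt{1-q^{2}}=e^{\pm i\varphi(t)}=z_{\pm}$ by \eqref{h1}. They are distinct because $0<\varphi(t)<\pi$, and they lie on $\mathbb{T}$, away from the singular points $0,\lambda,1/\lambda$. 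Moreover $P(z)=t\lambda(z-z_{+})(z-z_{-})$.

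For the second derivative, I would use that $P(z_{\pm})=0$. Differentiating the quotient, only the term containing $P'$ survives at $z_\pm$, so
\[
\Phi''(z_{\pm})=\frac{P'(z_{\pm})}{z_{\pm}(z_{\pm}-\lambda)(1-\lambda z_{\pm})}=\frac{t\lambda(z_{\pm}-z_{\mp})}{z_{\pm}(z_{\pm}-\lambda)(1-\lambda z_{\pm})}.
\]
To remove $t$, I would use the stationarity relation $P(z_{\pm})=0$ in the form
\[
t=\frac{(1-\lambda^{2})z_{\pm}}{(z_{\pm}-\lambda)(1-\lambda z_{\pm})}.
\]
Substituting this gives the stated formula
\[
\Phi''(z_{\pm})=\frac{(1-\lambda^{2})(z_{\pm}-z_{\mp})\lambda}{(z_{\pm}-\lambda)^{2}(1-\lambda z_{\pm})^{2}}.
\]
This quantity is nonzero because $z_{+}\neq z_{-}$, so both stationary points are of order one.

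The only mildly delicate points are the branch issue and confirming that the two roots avoid the singularities of $\Phi'$. Both are settled by working with $\Phi'$ as a logarithmic derivative and by $|z_{\pm}|=1$.
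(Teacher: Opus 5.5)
Your proof is correct and complete. The factorization $P(z)=t\lambda\,(z-z_+)(z-z_-)$, the endpoint values $q(\alpha)=-1$ and $q(\alpha^{-1})=1$, and the elimination of $t$ through $P(z_\pm)=0$ all check out, and $|z_\pm|=1$ keeps the roots away from $0,\lambda,1/\lambda$.

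The paper gives no proof of its own here: the lemma is simply quoted from \cite[Lemma 11]{SzZa4} and \cite[Lemma 5]{BFZ}. Your direct logarithmic-derivative computation is therefore a self-contained verification. It is also consistent with the paper's later formula $\tilde h_t''(\varphi(t))=t\,r(t)$, via the identity $h_t''(\varphi(t))=i z_+^2\,\Phi''(z_+)$.
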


We now turn to the analysis of the Fourier/Taylor coefficients. 
We recall the classical 
stationary phase theorem due
to M.~V.~Fedoryuk \cite[Theorem 1.6 p.107]{F1}. 
For the reader's convenience we restate it
here in our present notation. 

\begin{thm}
\textup{\label{thm:Fedoryuk} \cite[Theorem 1.6 p.107]{F1} }Let $I=[a,b]\subset\mathbb R$, 
and let $t$ denote an auxiliary real parameter belonging to a domain
$\Omega\subset\mathbb{R}^{k}$. Assume that the functions 
\[
G:\;I\times\Omega\to\mathbb C,\qquad\tilde{h}:\;I\times\Omega\to\mathbb R,
\]
satisfy the following hypotheses: 

{\rm(i)} $G$ is $C^{\infty}$ smooth on $I\times\Omega$ and, together
with all its derivatives in the first variable, vanishes at the endpoints
$a,b$.

{\rm(ii)} For every fixed $t\in\Omega$, the phase function $\tilde{h}_{t}(s)=\tilde{h}(s,t)$
is real-valued and possesses a unique non-degenerate stationary point
$s_0=s_0(t)\in(a,b)$, satisfying 
\[
\tilde{h}_{t}'(s_0(t))=0,\qquad|\tilde{h}_{t}''(s_0(t))|\ge\delta_{0}>0,\qquad t\in\Omega.
\]

Then, the oscillatory integral 
\[
J(n,t)=\int_a^b G(s,t)\,e^{\,in\tilde{h}_{t}(s)}\,ds,
\]
possesses the asymptotic expansion as $n\to+\infty$, 
\[
J(n,t)=\sqrt{\frac{2\pi}{n\,|\tilde{h}_{t}''(s_0(t))|}}\,\Bigl(G(s_0(t),t)+O(n^{-1})\Bigr)\exp\Bigl\{\,in\tilde{h}_{t}(s_0(t))+\frac{i\pi}{4}\,{\rm sgn}\bigl(\tilde{h}_{t}''(s_0(t))\bigr)\Bigr\},
\]
where the remainder term $O(n^{-1})$ is uniform over $t$ in every compact
subset of $\Omega$. 
\end{thm}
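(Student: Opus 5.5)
The statement is Fedoryuk's theorem, and I would prove it in the classical way: localize near the stationary point, straighten the phase there with a Morse-lemma change of variables, and evaluate a Gaussian (Fresnel) integral. Throughout, $t$ ranges over a fixed compact $\Omega_0\subset\Omega$. The goal is to keep every constant uniform in $t\in\Omega_0$, using the smoothness of $G$ and $\tilde h$ in $(s,t)$ together with the lower bound $|\tilde h_t''(s_0(t))|\ge\delta_0$.

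\textbf{Step 1: uniform localization.} By the implicit function theorem, applied to $\tilde h_t'(s)=0$ with nonvanishing $s$-derivative $\tilde h_t''$, the map $t\mapsto s_0(t)$ is smooth. By compactness of $\Omega_0$ and uniqueness of the stationary point, there are $\eta>0$ and $c>0$ such that:
\begin{itemize}
\item $[s_0(t)-2\eta,s_0(t)+2\eta]\subset(a,b)$;
\item $|\tilde h_t''|\ge\delta_0/2$ on that interval;
\item $|\tilde h_t'(s)|\ge c$ for $|s-s_0(t)|\ge\eta$, $s\in I$.
\end{itemize}
Choose a cutoff $\chi_t(s)=\chi((s-s_0(t))/\eta)$, with $\chi\in C_c^\infty(-2,2)$ and $\chi=1$ on $[-1,1]$, and split $G=\chi_tG+(1-\chi_t)G$. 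On the support of $(1-\chi_t)G$ the phase has no stationary point. I would integrate by parts repeatedly with the operator $L=\frac{1}{in\tilde h_t'}\frac{d}{ds}$. Because $G$ and all its $s$-derivatives vanish at $a$ and $b$, no boundary terms appear. Each step gains a factor $n^{-1}$, with constants controlled by finitely many derivatives of $G$ and $\tilde h$ on $I\times\Omega_0$. So this piece is $O(n^{-N})$ for every $N$, uniformly in $t$.

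\textbf{Step 2: Morse change of variables.} Near $s_0$, Taylor's formula with integral remainder gives
\[
\tilde h_t(s)-\tilde h_t(s_0)=\tfrac12\tilde h_t''(s_0)(s-s_0)^2\,\rho_t(s),
\]
where $\rho_t$ is smooth, $\rho_t(s_0)=1$, and $\rho_t\ge 1/4$ after shrinking $\eta$ uniformly. Set $u=(s-s_0)\sqrt{\rho_t(s)}$. This is a diffeomorphism near $s_0$, smooth in $t$, and it converts the localized integral to
\[
e^{in\tilde h_t(s_0)}\int_{\mathbb R}H_t(u)\,e^{\,i n\sigma u^2/2}\,du,\qquad \sigma=\tilde h_t''(s_0),
\]
where $H_t(u)=\chi_t(s)G(s,t)\,\frac{ds}{du}$ is compactly supported, smooth in $(u,t)$, and satisfies $H_t(0)=G(s_0,t)$.

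\textbf{Step 3: the Gaussian integral.} This is the step I expect to be the main obstacle, because the remainder must be shown to be $O(n^{-1})$ relative to the leading term and uniformly in $t$, not merely $o(1)$. Write $H_t(u)=H_t(0)+uK_t(u)$. The constant part gives the Fresnel value
\[
H_t(0)\sqrt{\frac{2\pi}{n|\sigma|}}\,e^{i\pi\,\mathrm{sgn}(\sigma)/4},
\]
up to an $O(n^{-\infty})$ error. To handle the discrepancy between the cut-off constant $H_t(0)\chi$ and the full Fresnel integral, I would integrate by parts away from the origin. For the linear term, use
\[
u\,e^{in\sigma u^2/2}=\frac{1}{in\sigma}\frac{d}{du}e^{in\sigma u^2/2}.
\]
One integration by parts turns $\int uK_t e^{in\sigma u^2/2}\,du$ into $\frac{-1}{in\sigma}\int K_t'\,e^{in\sigma u^2/2}\,du$. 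A further decomposition of $K_t'$ in the same way, or a direct bound $O(n^{-1/2})$ on $\int K_t' e^{in\sigma u^2/2}\,du$ from the Fresnel estimate, shows this is $O(n^{-3/2})$, i.e. $n^{-1/2}\cdot O(n^{-1})$.

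All constants depend only on sup-norms of a few derivatives of $G$ and $\tilde h$ on $I\times\Omega_0$ and on $\delta_0$. Hence they are uniform on $\Omega_0$. Adding the $O(n^{-\infty})$ contribution from Step 1 gives the stated expansion.
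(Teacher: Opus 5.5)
The paper gives no proof of this statement. It is quoted from Fedoryuk's monograph (Theorem 1.6, p.~107) and used as a black box in Section~3, so there is no argument of the authors to compare with. What you wrote is the classical proof, and it is correct, with all constants controlled on a compact $\Omega_0$:
non-stationary-phase integration by parts on $(1-\chi_t)G$;
the Morse normal form $u=(s-s_0)\sqrt{\rho_t(s)}$;
the Fresnel integral, plus one integration by parts on the linear remainder.

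Two small precisions. First, you need $u$ to be a diffeomorphism on the whole support $|s-s_0(t)|<2\eta$, not just near $s_0$. This follows because, for $s\neq s_0$, $u'=\tilde h_t'(s)/(\sigma u)>0$, since $\tilde h_t''$ has constant sign there. Second, in Step~3 you should write $H_t=H_t(0)\chi_0+uK_t$ with a fixed cutoff $\chi_0$, so that $K_t$ is compactly supported. Your reference to the cut-off constant shows this is what you intend.

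Compared with the bare citation, your version makes explicit which hypotheses give the uniformity in $t$ that the paper needs when it sums over $k\in I_n$:
\begin{itemize}
\item Joint smoothness of $\tilde h$ in $(s,t)$. The restated theorem does not list it, but you rightly use it for the implicit function theorem and the uniform derivative bounds.
\item Uniqueness of the stationary point on all of $I$, endpoints included. This is what makes $|\tilde h_t'|\ge c$ on $\{|s-s_0(t)|\ge\eta\}$ a compactness statement.
\item Infinite-order vanishing of $G$ at $a,b$. Without it, Step~1 would leave boundary terms of absolute size $n^{-1}$, i.e.\ only a relative error $O(n^{-1/2})$. This is exactly why the paper inserts the cutoff $\nu$, estimates the two end pieces by hand, and ends with an absolute $O(n^{-1})$ error in Theorem~3.
\end{itemize}
Your argument also runs directly on a compact parameter set. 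That matches how the paper actually applies the theorem, with $\Omega=[\beta,\beta^{-1}]$ rather than an open domain.
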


Going back to the Fourier/Taylor coefficients of $f_{n}$ we observe
that the latter are real (because $\lambda\in(0,1)$). Thus, we have
$$
\widehat{f_{n}}(k)  =\frac{1}{\pi}\Re\Bigl\{\int_{0}^{\pi}\frac{1}{1-\lambda e^{is}}e^{inh_{t}(s)}\,{\rm d}s\Bigr\}
 =\frac{1}{\pi}\Re\Bigl\{\int_{0}^{\pi}\frac{1}{1-\lambda e^{-is}}e^{in\tilde{h_{t}}(s)}\,{\rm d}s\Bigr\},
$$
where $t=k/n$, 
\[
h(s)=h_{t}(s)=-i\Phi_{t}(e^{is}),\qquad 
\tilde{h}(s)=\tilde{h}_t(s):=-h_{t}(s).
\]


By \eqref{h1xx} and \eqref{h1} we have 
\[
\Abs{e^{i\varphi(t)}-1}\ge(1-\lambda)\sqrt{\frac{\beta-\alpha}{\alpha\lambda}},\qquad\Abs{e^{i\varphi(t)}+1}\geq(1+\lambda)\sqrt{\frac{\beta-\alpha}{\beta\lambda}}.
\]
Therefore, given $\beta\in(\alpha,1)$, we can fix $\gamma=\gamma(\alpha,\beta)>0$ such that 
if $t\in[\beta,\beta^{-1}]$, then the only critical point $\varphi(t)$ of $\tilde{h}_t$ on $(0,\pi)$ satisfies the inequality $\gamma\le \varphi(t)\le\pi-\gamma$. 
In view of applying Fedoryuk's result we introduce a smooth function $\nu:\left[0,\pi\right]\to\mathbb R$
such that $\nu=1$ on $[\gamma/2,\pi-\gamma/2],$ $\nu=0$ on
$[0,\gamma/4]\cup[\pi-\gamma/4,\pi]$ and $0\le\nu\le 1$.
Setting 
\[
g(s)=\frac{1}{1-\lambda e^{-is}},
\]
we have 
\begin{gather*}
\int_0^\pi g(s)e^{in\tilde{h}(s)}{\rm d}s  =\int_0^\pi(1-\nu(s))g(s)e^{in\tilde{h}(s)}{\rm d}s+\int_0^\pi\nu(s)g(s)e^{in\tilde{h}(s)}{\rm d}s\\
  =\int_0^{\gamma/2}\!\!\!(1-\nu(s))g(s)e^{in\tilde{h}(s)}{\rm d}s
  +\int_{\pi-\gamma/2}^\pi\!\!\!\!(1-\nu(s))g(s)e^{in\tilde{h}(s)}{\rm d}s
  +\int_{\gamma/4}^{\pi-\gamma/4}\!\!\!\!\nu(s)g(s)e^{in\tilde{h}(s)}{\rm d}s.
\end{gather*}
Integrating by parts and taking into account that $\tilde{h}'$ does not
vanish on $[0,\gamma/2]$, we obtain that  
\begin{gather*}
\int_0^{\gamma/2}(1-\nu(s))g(s)e^{in\tilde{h}(s)} {\rm d}s =\frac{1}{in}\int_{0}^{\gamma/2}\frac{(1-\nu(s))g(s)}{\tilde{h}'(s)}
(e^{in\tilde{h}(s)})'{\rm d}s\\
  =\frac{(1-\nu(\gamma/2))g(\gamma/2)e^{in\tilde{h}(\gamma/2)}}{in\tilde{h}'(\gamma/2)}-\frac{(1-\nu(0))g(0)e^{in\tilde{h}(0)}}{in\tilde{h}'(0)}
  \\-\frac{1}{in}\int_{0}^{\gamma/2}
  \Bigl(\frac{(1-\nu(s))g(s)}{\tilde{h}'(s)}\Bigr)'e^{in\tilde{h}(s)}{\rm d}s\\
  =-\frac{1}{in}\int_{0}^{\gamma/2}
  \Bigl(\frac{(1-\nu(s))g(s)}{\tilde{h}'(s)}\Bigr)'e^{in\tilde{h}(s)}{\rm d}s
  -\frac{e^{in\tilde{h}(0)}}{in(1-\lambda)\tilde{h}'(0)},
\end{gather*}
because $\nu(\gamma/2)=1$ and $g(0)=\frac{1}{1-\lambda}$. By \eqref{four4}, 
$$
\min_{\varphi\in[0,\gamma/2]}|\tilde{h}'|=|\tilde{h}'(\gamma/2)|=-\tilde{h}'(\gamma/2), 
$$
and by the mean value theorem there exists $\theta\in(\gamma/2,\varphi(t))$
such that 
\begin{align*}
-\tilde{h}'(\gamma/2) & =\tilde{h}'(\varphi(t))-\tilde{h}'(\gamma/2)\\
 & =(\varphi(t)-\gamma/2)\tilde{h}''(\theta)\ge \frac{\gamma}{2}\frac{2\lambda(1-\lambda^{2})\sin\theta}{(1+\lambda^{2}-2\lambda\cos\theta)^{2}}\\
 & \ge\frac{\gamma\lambda(1+\lambda)}{1-\lambda}\min_{t\in[\gamma/2,\pi-\gamma]}\sin\theta>0.
\end{align*}
Therefore, 
\[
\int_{0}^{\gamma/2}(1-\nu(s))g(s)e^{in\tilde{h}(s)}{\rm d}s=\cO(n^{-1})
\]
uniformly for $t=\frac{k}{n}\in[\beta,\beta^{-1}]$. An analogous argument gives 
\[
\int_{\pi-\gamma/2}^{\pi}(1-\nu(s))g(s)e^{in\tilde{h}(s)}{\rm d}s=\cO(n^{-1})
\]
uniformly for $t\in[\beta,\beta^{-1}]$. Thus, 
\begin{equation}
\widehat{f_{n}}(k)=\frac{1}{\pi}\Re\int_{\gamma/4}^{\pi-\gamma/4}\nu(s)g(s)e^{in\tilde{h}(s)}{\rm d}s+\cO(n^{-1}).\label{eq:approx_f_nhat}
\end{equation}
It remains to apply Fedoryuk's result to 
\[
J(n,t)=\int_{\gamma/4}^{\pi-\gamma/4}\nu(s)g(s)e^{in\tilde{h}(s)}{\rm d}s.
\]
We set
\[
I=[a,b]=[\gamma/4,\pi-\gamma/4],\quad\Omega=[\beta,\beta^{-1}],
\]
and 
\[
G(s,t)=\frac{\nu(s)}{1-\lambda e^{-is}}.
\]
As discussed above, for $t\in[\beta,\beta^{-1}]$
the unique critical point $\varphi(t)$ of the function $\tilde{h}_t$ on $(0,\pi)$ 
satisfies the inequality 
\[
\tilde{h}_t''(\varphi(t))\ge C(\beta,\lambda)>0,
\]
which allows us to apply Fedoryuk's asymptotic formula. 
We have $\nu(\varphi(t))=1$ because $\varphi(t)\in[\gamma,\pi-\gamma]$. 
By \eqref{four4}, 
\[
\tilde{h}_t''(\varphi(t))=tr(t),
\]
and we obtain 
\begin{align*}
J(n,t) & =\sqrt{2\pi}\abs{\tilde{h}_t''(\varphi(t))}^{-1/2}e^{i\frac{\pi}{4}}e^{in\tilde{h}_t(\varphi(t))}{\nu(\varphi(t))}g(\varphi(t))n^{-1/2}+\cO\left(n^{-3/2}\right)\\
 & =e^{in\tilde{h}_t(\varphi(t))+i\frac{\pi}{4}}\frac{1}{1-\lambda e^{-i\varphi(t)}}\frac{1}{t^{1/2}(t-\alpha)^{1/4}(\alpha^{-1}-t)^{1/4}}\Bigl(\frac{2\pi}{n}\Bigr)^{1/2}+\cO(n^{-3/2}),
\end{align*}
where the $\cO$-term is uniform over $t\in[\beta,\beta^{-1}]$. 

By \eqref{seven7}, we have 
\begin{multline*}
1-\lambda e^{-i\varphi(t)}=\bigl((1-\lambda\cos (\varphi(t)))^2+\lambda^2(1-\cos(\varphi(t)))^2\bigr)^{1/2}e^{-i\psi(t)}\\
=\bigl(1+\lambda^2-2\lambda q(t)\bigr)^{1/2}e^{-i\psi(t)}=\sqrt{\frac{1-\lambda^{2}}{t}}e^{-i\psi(t)}.
\end{multline*}

Therefore,
$$
J(n,t) =e^{in\tilde{h}_t(\varphi(t))+i\frac{\pi}{4}+i\psi(t)}\frac{1}{(t-\alpha)^{1/4}(\alpha^{-1}-t)^{1/4}}\Bigl(\frac{2\pi}{n(1-\lambda^{2})}\Bigr)^{1/2} +\cO(n^{-3/2}),\,\, n\to\infty,
$$
with the $\cO$-term uniform over $t\in[\beta,\beta^{-1}]$. 
Going
back to \eqref{eq:approx_f_nhat} 
we conclude 
\begin{align*}
\widehat{f_{n}}(k) & =\sqrt{\frac{2}{n(1-\lambda^{2})\pi}}\frac{\cos\bigl(n\tilde{h}_t(\varphi(t))+\frac{\pi}{4}+\psi(t)\bigr)}{[(\alpha^{-1}-t)(t-\alpha)]^{1/4}}+\cO\left(n^{-1}\right)\\
 & =\sqrt{\frac{2}{n(1-\lambda^{2})\pi}}\frac{\cos(nF(t)-\frac{\pi}{4}-\psi(t))}{[(\alpha^{-1}-t)(t-\alpha)]^{1/4}}+\cO\left(n^{-1}\right),
\end{align*}
where the $\cO$-term is uniform over $t\in[\beta,\beta^{-1}]$.
%
%
%
%
%

%
%
%
%
%

\end{document}